\tikzset{cd/.style=matrix of math nodes,row sep=2em,column sep=2em, text height=1.5ex, text depth=0.5ex}
\tikzset{cdar/.style=->,auto}
\tikzset{mid/.style={anchor=mid}} % put labels on the arrow
\tikzset{dar/.style={double,double equal sign distance,-implies}}%style for 2-arrows in triangles
\tikzset{narrowfill/.style={inner sep=1pt, fill=white}}% style for nodes with filled background
\newcommand*{\arxiv}[1]{\href{http://www.arxiv.org/abs/#1}{arXiv: #1}}
\renewcommand{\PrintDOI}[1]{\href{http://dx.doi.org/\detokenize{#1}}{doi: \detokenize{#1}}%
  \IfEmptyBibField{pages}{, (to appear in print)}{}}
\numberwithin{equation}{section}
\theoremstyle{plain}
\newtheorem{theorem}[equation]{Theorem}
\newtheorem{lemma}[equation]{Lemma}
\newtheorem{proposition}[equation]{Proposition}
\newtheorem{corollary}[equation]{Corollary}
\theoremstyle{definition}
\newtheorem{definition}[equation]{Definition}
\theoremstyle{remark}
\newcommand*{\Z}{\mathbb Z}
\newcommand*{\T}{\mathbb T}
\newcommand*{\Comp}{\mathbb K}
\newcommand*{\XK}{\textup{XK}}
\newcommand*{\K}{\textup{K}}
\newcommand*{\trans}{\textup{t}}
\newcommand*{\Cst}{\textup C^*}
\newcommand*{\Cstar}{\texorpdfstring{$\textup C^*$\nb-}{C*-}}
\newcommand*{\UNIT}{\mathds{1}}
\newcommand*{\nb}{\nobreakdash}  % no break after this hyphen
\DeclareMathOperator{\Ext}{Ext}
\DeclareMathOperator{\coker}{coker}
\DeclareMathOperator{\Prim}{Prim}
\newcommand*{\defeq}{\mathrel{\vcentcolon=}}
\newcommand*{\eqdef}{\mathrel{=\vcentcolon}}
\newcommand{\Asf}{\mathsf{A}}
\newcommand{\Psf}{\mathsf{P}}
\newcommand{\AP}{\mathbb{AP}}
\newcommand{\HS}{\mathbb{HS}}
\newcommand{\Ideals}{\mathbb{I}}
\begin{document}

\title[Cuntz Splice invariance for purely infinite graph algebras]{Cuntz Splice invariance for purely infinite\\ graph algebras}

\author{Rasmus Bentmann}
\email{rasmus.bentmann@mathematik.uni-goettingen.de}

\address{Mathematisches Institut\\
  Georg-August Universit\"at G\"ottingen\\
  Bunsenstra\ss{}e 3--5\\
  37073 G\"ottingen\\
  Germany}

\begin{abstract}
  We show that the Cuntz Splice preserves the stable isomorphism class of a purely infinite graph \Cstar{}algebra with finitely many ideals.
\end{abstract}

\thanks{This work was supported by the German Research Foundation (DFG) through the project grant ``Classification of non-simple purely infinite \Cstar{}algebras''}

\subjclass[2010]{46L35, 46L55, 46L80, 18G35}
%\keywords{}

%\thanks{}

\maketitle

\section{Introduction}
\label{sec:intro}

The assignment $E\mapsto\Cst(E)$ associates to a countable directed graph~$E$ a \Cstar{}algebra~$\Cst(E)$ given as the universal \Cstar{}algebra with certain generators and relations encoded by the graph~$E$. This generalizes a construction of Cuntz and Krieger exhibiting close ties to symbolic dynamics: the stabilized Cuntz--Krieger algebra of a subshift of finite type is an invariant of flow equivalence~\cites{Cuntz-Krieger:topological_Markov_chains,Cuntz:topological_Markov_chains_II}.

It is therefore natural to ask when two graphs~$E$ and~$F$ give rise to Morita equivalent \Cstar{}algebras. In particular, it is desirable to find modifications that can be applied to a graph~$E$, such that the \Cstar{}algebra of the resulting graph~$F$ is always stably isomorphic to~$\Cst(E)$. Various such modifications, or \emph{moves}, have been established (see~\cites{Bates-Pask:Flow_equivalence,Crisp-Gow:Contractible,Sorensen:Geometric_class_simple_graph,Eilers-Restorff-Ruiz-Sorensen:Geom_classif}) and, in some cases, it has even been shown that two graphs~$E$ and~$F$ give rise to stably isomorphic \Cstar{}algebras if and \emph{only if} $F$ is the result of a finite number of permitted modifications applied to~$E$ (see~\cites{Sorensen:Geometric_class_simple_graph,Eilers-Restorff-Ruiz-Sorensen:Geom_classif}).

One important example of such a graph modification is the \emph{Cuntz Splice}. This move does not preserve the flow equivalence class of a subshift of finite type because it reverses the sign of the determinant of the matrix $\UNIT-\Asf$, where~$\Asf$ is the adjacency matrix defining the subshift (see~\cite{Bowen-Franks}). However, the Cuntz Splice \emph{does} preserve the stable isomorphism class of the associated Cuntz--Krieger algebra~$\mathcal O_\Asf$ (this follows, for instance, from the classification theorem in~\cite{Restorff:Classification}).

The more general question of Cuntz Splice invariance for the class of all graph \Cstar{}algebras is currently open. A vital step towards the classification of \emph{unital} graph \Cstar{}algebras (that is, \Cstar{}algebras of graphs with finitely many vertices) established in~\cite{Eilers-Restorff-Ruiz-Sorensen:Geom_classif} was to prove Cuntz Splice invariance for this class (see~\cite{Eilers-Restorff-Ruiz-Sorensen:Geom_classif}*{Proposition~5.8}, or \cite{Rordam:Class_of_CK} for an earlier result along such lines).

On the other hand, if a complete classification invariant has already been established by other means, one would hope to be able to determine whether Cuntz Splice invariance holds for the class of graphs under consideration by comparing the values of the invariant. In this article, we achieve this for the class of graphs whose \Cstar{}algebra is purely infinite and has finitely many ideals.

In~\cite{Bentmann-Meyer:More_general}, we have found a complete stable isomorphism invariant, denoted by~$\XK\delta$, for the class of purely infinite graph \Cstar{}algebras with finitely many ideals. Let~$\Cst(E)$ belong to this class and let~$E_C$ be the result of an application of a Cuntz Splice to the graph~$E$. Then $\Cst(E_C)$ also belongs to the classified class, and we will show that $\Cst(E)$ and $\Cst(E_C)$ are stably isomorphic by verifying that $\XK\delta\bigl(\Cst(E)\bigr)\cong\XK\delta\bigl(\Cst(E_C)\bigr)$.

We would like to regard our result as positive evidence towards the hope that Cuntz Splice invariance holds more generally for all (non-simple, non-unital) graph \Cstar{}algebras, although it is not clear to us how the methods from~\cite{Eilers-Restorff-Ruiz-Sorensen:Geom_classif} should be adapted to the non-unital case.

James Gabe recently announced results that lead to a generalization of the classification theorem from~\cite{Bentmann-Meyer:More_general} to cases of certain infinite primitive ideal spaces. A computation as we perform in the present article should then show that Cuntz Splice invariance also holds for purely infinite graph \Cstar{}algebras with infinitely many ideals.

We give a short description of the flavour of our computations. Let $\Asf^E$ denote the adjacency matrix of the graph~$E$. Recall that the $\K$\nb-theory groups of $\Cst(E)$ are computed as the kernel and the cokernel of the matrix $(\Asf^E-\UNIT)^\trans$ regarded as a map on the free Abelian group on the set of vertices of~$E$ (assuming that all vertices of~$E$ are regular). An application of some row and column manipulations preserving the kernel and cokernel of a given matrix then easily shows that $\K_*\bigl(\Cst(E)\bigr)\cong\K_*\bigl(\Cst(E_C)\bigr)$, that is, the Cuntz Splice preserves the $\K$\nb-theory of the graph \Cstar{}algebra. As we shall describe below, the arguments in this article are a refinement of the above computation.

In~\cite{Bentmann:Real_rank_zero_and_int_cancellation}, we defined the homology theory~$\XK$ for \Cstar{}algebras over a finite space~$X$ taking values in $\Z/2$\nb-graded modules over the integral incidence algebra~$\Z X$ of the partially ordered set (associated to)~$X$. Let $E$ be a graph with an identification $\Prim\bigl(\Cst(E)\bigr)\cong X$. Then, just like above, $\XK\bigl(\Cst(E)\bigr)$ is the homology of a length-one chain complex $\Psf_\bullet^E=(P_1^E\to P_0^E)$ of projective $\Z X$\nb-modules canonically associated to~$E$.

Our main result is that there exists an explicit quasi-isomorphism between the complexes~$\Psf_\bullet^E$ and~$\Psf_\bullet^{E_C}$; this is a stronger statement than the two complexes merely having isomorphic homology modules. While the latter statement says that $\XK\bigl(\Cst(E)\bigr)\cong\XK\bigl(\Cst(E_C)\bigr)$, the former implies that $\XK\delta\bigl(\Cst(E)\bigr)\cong\XK\delta\bigl(\Cst(E_C)\bigr)$, so that the classification results from~\cite{Bentmann-Meyer:More_general} become applicable.

The article is organized as follows. After the two preliminary Sections~\ref{sec:graph_algs} and~\ref{sec:the_invariant} discussing some aspects of graphs \Cstar{}algebras and the classification invariant~$\XK\delta$, respectively, we prove the main result under regularity assumptions in Section~\ref{sec:reg} and in full generality in Section~\ref{sec:gen}.

\subsection*{Acknowledgement}
The author would like to thank James Gabe and Adam P.\ W.\ S\o rensen for helpful discussions.

\section{Graph \Cstar{}algebras}
  \label{sec:graph_algs}

In this section, we gather definitions related to graph \Cstar{}algebras and establish a few results needed later on. Throughout this article, a graph will always be a countable directed graph:

\begin{definition}[Graph notions]
A graph~$E$ is a four-tuple $E=(E^0,E^1,r,s)$ where~$E^0$ and~$E^1$ are countable sets, and $r$ and $s$ are maps from~$E^1$ to~$E^0$. The elements of~$E^0$ are called \emph{vertices}, the elements of~$E^1$ are called \emph{edges}, the map~$r$ is called the \emph{range map}, and the map $s$ is called the \emph{source map}. We say that an edge $e\in E^1$ is an edge \emph{from~$s(e)$ to~$r(e)$} and that \emph{$s(e)$ emits~$e$}.

A \emph{path} in~$E$ is a sequence $\mu = e_1 e_2 \cdots e_n$ of edges $e_i\in E^1$ with $n\geq 1$ and $r(e_i)=s(e_{i+1})$ for all $i=1,2,\ldots, n-1$. Extending the range and source maps to paths by setting $s(\mu)=s(e_1)$ and $r(\mu)=r(e_n)$, a \emph{cycle} is a path~$\mu$ such that $s(\mu)=r(\mu)$ and a \emph{return path} is a cycle $\mu = e_1 e_2\cdots e_n$ such that $r(e_i)\neq r(\mu)$ for $i<n$. For $v,w\in E^0$, we write $v\geq w$ if there is a path from~$v$ to~$w$.

A vertex $v\in E^0$ is called \emph{regular} if the set $s^{-1}(v)$ is finite and non-empty, it is called a \emph{sink} if $s^{-1}(v)$ is empty, and it is called an \emph{infinite emitter} if $s^{-1}(v)$ is infinite. A \emph{breaking vertex} is an infinite emitter $v\in E^0$ such that the number of emitted edges, whose range is equal to~$v$ or is the source of a path to~$v$, is finite and non-zero.

The graph~$E$ satisfies \emph{Condition} (K) if no vertex $v\in E^0$ supports precisely one return path.

The \emph{adjacency matrix} $\Asf^E$ of~$E$ is the $E^0\times E^0$\nb-matrix whose entry at $(u,v)$ is the number of egdes from~$u$ to~$v$.
\end{definition}

\begin{definition}[Cuntz Splice]
Let $E=(E^0,E^1,r,s)$ be a graph and let $v\in E^0$ be a vertex that supports at least two (distinct) return paths.
The \emph{Cuntz Splice $E_C^{v}=E_C=(E_C^0,E_C^1,r_C,s_C)$ of~$E$ at~$v$} is defined by 
\begin{align*}
E_C^0 &\defeq E^0\sqcup\{u_1 , u_2 \} \\
E_C^1 &\defeq E^1\sqcup\{f_1 , f_2 , h_1 , h_2 , k_1 , k_2 \},
\end{align*}
where $r_{C}(e)=r(e)$ and $s_{C}(e)=s(e)$ for $e\in E^1$ and
\begin{align*}
s_{C}(f_1)&=v,\quad s_{C}(f_2)=u_1,\quad s_{C}(h_i)=u_1,\quad s_{C}(k_i)=u_2,\\
r_{C}(f_1)&=u_1,\quad r_{C}(f_2)=v,\quad r_{C}(h_i)=u_i,\quad r_{C}(k_i)=u_i.
\end{align*}
In other words, the graph~$E_C^{v}$ is made from the graph~$E$ by adding a segment of the form
\[
 \xymatrix{v\ar@/^0.5em/[r] & u_1 \ar@/^0.5em/[l] \ar@/^0.5em/[r] \ar@(ul,ur) & u_2 \ar@/^0.5em/[l] \ar@(ur,dr)}\qquad.
\]
\end{definition}

\begin{proposition}
\label{pro:cond_(K)}
 Let $E$ be a graph satisfying Condition~\textup{(K)}. Let $v\in E^0$ support at least two return paths. Then $E_C^v$ satisfies Condition~\textup{(K)}.
\end{proposition}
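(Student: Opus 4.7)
The plan is to check Condition~(K) vertex-by-vertex for $E_C^v$, splitting the vertex set into three parts: the new vertices $\{u_1,u_2\}$, the spliced vertex $v$, and the remaining old vertices $E^0\setminus\{v\}$. The key observation is that the only edge between the \textquotedblleft splice segment\textquotedblright\ $\{u_1,u_2\}$ and the rest of the graph goes through $v$, so any return path at an outside vertex that uses a new edge must both enter and exit the splice segment via $v$.

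For $u_1$, the self-loop $h_1$ and the composition $h_2k_1$ are two distinct return paths; symmetrically, $u_2$ supports the return paths $k_2$ and $k_1h_2$. For $v$, any two distinct return paths in $E$ (which exist by hypothesis) remain return paths in $E_C^v$, so $v$ continues to support at least two.

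For a vertex $w\in E^0\setminus\{v\}$, Condition~(K) applied to $E$ says that $w$ supports either zero or at least two return paths in $E$. In the second case, these return paths still exist in $E_C^v$, so the conclusion is immediate. In the first case, I claim that $w$ supports no return path in $E_C^v$ either. Indeed, any return path $\mu$ at $w$ in $E_C^v$ that uses no new edge already lies in $E$, contradicting the assumption. Otherwise $\mu$ uses some edge incident to the splice segment, and since $\mu$ starts and ends at $w\notin\{v,u_1,u_2\}$, it decomposes as a path from $w$ to $v$ in $E$, a portion traversing the splice, and a path from $v$ back to $w$ in $E$; concatenating these two $E$\nobreakdash-paths yields a closed walk at $w$ in $E$, hence a return path at $w$ in $E$, again a contradiction.

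There is no genuine obstacle here; the only mild subtlety is making the case $w\in E^0\setminus\{v\}$ precise, which is handled by the single observation that the splice segment is attached to the rest of $E$ only at $v$. Once this is noted, the proof is a short case distinction.
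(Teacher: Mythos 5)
Your proof is correct and takes essentially the same route as the paper's: the same three-way split of the vertex set into $\{u_1,u_2\}$, $\{v\}$, and $E^0\setminus\{v\}$, with the decisive point in the last case being that the splice segment is attached to the rest of the graph only at $v$, so a return path at $w$ in $E_C^v$ would force $w\geq v\geq w$ and hence a return path at $w$ already in $E$. You merely spell out details the paper leaves implicit (explicit return paths at $u_1$ and $u_2$, and the decomposition of a hypothetical return path at $w$).
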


\begin{proof}
 In the graph $E_C^v$ the distinguished vertex~$v$ as well as the two added vertices~$u_1$ and~$u_2$ support two return paths. Let $w\in (E_C^v)^0\setminus\{v,u_1,u_2\}=E^0\setminus\{v\}$. If $w$ supports two return paths in~$E$ then these are also return paths in~$E_C^v$. If $w$ supports no return path in~$E$ then in particular we cannot have $v\geq w\geq v$, so that $w$ supports no return path in~$E$ either.
\end{proof}

We will use the more common graph \Cstar{}algebra convention, which is \emph{opposite} to the one in Raeburn's monograph~\cite{Raeburn:Book}:

\begin{definition}[Graph \Cstar{}algebra]
Let $E=(E^0,E^1,r,s)$ be a graph. The \emph{graph \Cstar{}algebra} $C^*(E)$ is defined as the universal \Cstar{}algebra generated by mutually orthogonal projections $p_v$, $v \in E^0$, and partial isometries $s_e$, $e \in E^1$, satisfying
\begin{itemize}
	\item $s_e^* s_f=0$ for all $e,f \in E^1$ with $e\neq f$,
	\item $s_e^* s_e=p_{r(e)}$ for all $e\in E^1$,
	\item $s_e s_e^*\leq p_{s(e)}$ for all $e\in E^1$,
	\item $p_v=\sum_{e\in s^{-1}(v)} s_e s_e^*$ for all regular vertices $v\in E^0$.
\end{itemize}
The lattice of closed ideals in $C^*(E)$ will be denoted by~$\Ideals\bigl(\Cst(E)\bigr)$, its primitive ideal space by~$\Prim\bigl(\Cst(E)\bigr)$.
\end{definition}

\begin{definition}[Subsets of graphs]
A subset $K\subseteq E^0$ is \emph{hereditary} if $v\in K$ and $v\geq w$ implies $w\in K$. A subset $K\subseteq E^0$ is \emph{saturated} if the following holds for every regular vertex $v\in E^0$: if $r(e)\in K$ for every $e\in E^1$ with $s(e)=v$ then $v\in K$.  The set of hereditary and saturated subsets of~$E$ is denoted by~$\HS(E)$.

Given a hereditary and saturated subset $H\subseteq E^0$, one defines the set $H_\infty^{\textup{fin}}$ to consist of all infinite emitters in~$E$ that belong to~$E^0\setminus H$ and emit a finite, non-zero number of edges to $E^0\setminus H$. An \emph{admissible pair} for~$E$ is a pair $(H,B)$ consisting of a hereditary and saturated set $H\subseteq E^0$ and an arbitrary subset $B\subseteq H_\infty^{\textup{fin}}$. The set of admissible pairs for~$E$ is denoted by~$\AP(E)$.

A \emph{maximal tail} is a non-empty subset $M\subseteq E^0$ such that the following conditions hold:
\begin{itemize}
 \item if $v\geq w$ and $w\in M$ then $v\in M$;
 \item if $v\in M$ is a regular vertex then there exists $e\in E^1$ with $s(e)=v$ and $r(e)\in M$;
 \item if $v,w\in M$ there exists $y\in M$ such that $v\geq y$ and $w\geq y$.
\end{itemize}
\end{definition}

\begin{theorem}[\cite{Hong-Szymanski:purely_inf_graph_algs}*{Theorem~2.3}]
\label{thm:purely_inf}
 The \Cstar{}algebra $\Cst(E)$ is purely infinite if and only if the following conditions hold:
 \begin{itemize}
  \item the graph~$E$ satisfies Condition \textup{(K)};
  \item there are no breaking vertices in~$E$;
  \item each vertex in each maximal tail~$M$ connects to a cycle in~$M$.
 \end{itemize}

\end{theorem}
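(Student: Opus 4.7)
The plan is to use the Kirchberg--R{\o}rdam criterion: $\Cst(E)$ is purely infinite if and only if every nonzero hereditary sub-\Cstar{}algebra contains an infinite projection. This reduces the theorem to two graph-theoretic tasks: producing infinite projections in every nonzero hereditary subalgebra when the three listed conditions hold, and exhibiting stably finite subquotients as obstructions whenever one of the conditions fails (using that pure infiniteness passes to subquotients).

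For the sufficiency direction I would proceed in two steps. First, I would show that every nonzero hereditary sub-\Cstar{}algebra of $\Cst(E)$ contains a projection Murray--von Neumann equivalent to some vertex projection $p_v$; this is a standard consequence of the Cuntz--Krieger uniqueness theorem (applicable because Condition~(K) holds) combined with the faithfulness of the canonical conditional expectation onto the gauge-invariant AF-core, and the hypothesis of no breaking vertices ensures there are no pathological projections escaping the closed linear span of path projections $s_\mu s_\mu^*$. Second, I would show that each $p_v$ dominates an infinite projection. The saturation of the set $\{w\in E^0 : v\geq w\}$ contains a maximal tail $M$; by the cycle hypothesis, there is a return path $\mu$ in $M$ accessible from $v$ via some path $\alpha$; and Condition~(K) forces $s(\mu)$ to support a second return path $\nu$ distinct from $\mu$. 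Then $s_\mu$ and $s_\nu$ give two mutually orthogonal subprojections of $p_{s(\mu)}$ each equivalent to $p_{s(\mu)}$ itself, so $p_{s(\mu)}$ is properly infinite. Conjugating by $s_\alpha$ then yields an infinite projection dominated by $p_v$.

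For the necessity direction, I would argue contrapositively by exhibiting a non-purely-infinite subquotient whenever any of the three conditions fails. A violation of Condition~(K) at a vertex $v$ supporting a unique return path of length $n$ gives, after quotienting by the ideal attached to the hereditary saturated complement of the cycle, a subalgebra of the form $M_n\bigl(C(\T)\bigr)$ up to Morita equivalence, which is stably finite. A breaking vertex~$v$ produces a finite gap projection in the quotient corresponding to an admissible pair of the form $(H,\emptyset)$ with $H$ the hereditary saturation of the vertices reachable from the edges that $v$ emits outside $\{v\}$. Finally, if some vertex in a maximal tail $M$ fails to connect to a cycle inside $M$, then the simple subquotient attached to $M$ is an AF-algebra, in particular stably finite.

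The main obstacle, I expect, is the first step of the sufficiency argument in the presence of infinite emitters: reducing an arbitrary hereditary subalgebra to something Murray--von Neumann equivalent to a vertex projection is subtle outside the row-finite case, and one either has to invoke a Drinen--Tomforde desingularization to import the row-finite result or else carry out a direct combinatorial analysis within the AF-core that carefully uses the absence of breaking vertices. Once that technical step is in hand, the rest of the argument is a matter of translating the graph-theoretic hypotheses into the existence of the two orthogonal copies of $p_{s(\mu)}$ described above.
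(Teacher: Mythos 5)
First, a point of order: the paper does not prove this statement at all --- it is quoted, with attribution, from \cite{Hong-Szymanski:purely_inf_graph_algs}*{Theorem~2.3} and used as a black box (e.g.\ in Proposition~\ref{pro:purely_inf}). There is therefore no internal proof to compare against, so I assess your proposal against what a correct proof must accomplish.

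Your proposal has a genuine gap, and it sits in the very first line. For non-simple \Cstar{}algebras --- and the algebras of interest in this paper are precisely the non-simple ones --- pure infiniteness in the sense of Kirchberg and R\o rdam is \emph{not} equivalent to ``every nonzero hereditary sub-\Cstar{}algebra contains an infinite projection''. The correct statement (Kirchberg--R\o rdam, \emph{Non-simple purely infinite $C^*$-algebras}, Proposition~4.7) demands infinite projections in every nonzero hereditary sub-\Cstar{}algebra \emph{of every quotient}, and the quotients cannot be dropped. Concretely, let $A=(\Comp\otimes\mathcal O_\infty)^\sim$ be the unitization of the stabilized Cuntz algebra $\mathcal O_\infty$. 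Every nonzero hereditary sub-\Cstar{}algebra $B\subseteq A$ contains an infinite projection: either $B\subseteq\Comp\otimes\mathcal O_\infty$, which is simple and purely infinite, or, using real rank zero, $B$ contains a projection $p$ mapping to $1\in\C\cong A/(\Comp\otimes\mathcal O_\infty)$, and then any nonzero projection in $p(\Comp\otimes\mathcal O_\infty)p$ is infinite and lies in $B$. Yet $A$ is not purely infinite, since pure infiniteness passes to quotients and the quotient $\C$ is finite. Your sufficiency argument produces infinite projections only in hereditary subalgebras of $\Cst(E)$ itself and never enters a quotient, so even if your Steps 1 and 2 were carried out in complete detail, pure infiniteness of $\Cst(E)$ would not follow. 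Repairing this is where the real content of the theorem lies: one must show that the three hypotheses together force $H_\infty^{\textup{fin}}=\emptyset$ for \emph{every} hereditary and saturated $H$ (if $v\in H_\infty^{\textup{fin}}$, absence of breaking vertices forces that no edge emitted by $v$ has range reaching back to $v$; on the other hand $\{w\in E^0: w\geq v\}\cup\{v\}$ is a maximal tail, so $v$ connects to a cycle whose vertices all reach $v$, which manufactures exactly such an edge --- a contradiction). Hence, by Theorem~\ref{thm:admissible_pairs}, every quotient of $\Cst(E)$ is again the graph \Cstar{}algebra of a quotient graph inheriting the three hypotheses, and only then can one run your hereditary-subalgebra argument quotient by quotient. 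Note that this is also the true role of the no-breaking-vertices hypothesis; the role you assign it in Step 1 (ruling out ``pathological projections'' in the core) is not where it is needed.

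The remaining skeleton --- two distinct return paths giving two orthogonal copies of $p_{s(\mu)}$ inside itself for sufficiency, and stably finite subquotients for necessity --- is the standard one and sound in spirit, with fixable inaccuracies. The Step 1 lemma needs Condition (L) (which Condition (K) implies) and, at infinite emitters, produces projections equivalent to gap projections $p_v-\sum_{e\in F}s_es_e^*$ rather than to vertex projections, as you partially anticipate. In the necessity direction, the hereditary and saturated set witnessing the breaking-vertex obstruction should be $\{w\in E^0: w\text{ does not reach }v\}$; your proposed $H$ (the hereditary saturation of the ranges of the edges emitted by $v$) need not avoid $v$ --- it contains $v$ as soon as some return path at $v$ has length at least two --- in which case the gap projection does not survive in the corresponding quotient. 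Finally, the subquotient attached to a maximal tail need not be simple, although exhibiting a nonzero AF subquotient obstructs pure infiniteness all the same.
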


\begin{proposition}
  \label{pro:purely_inf}
 Let $E$ be a graph such that $\Cst(E)$ is purely infinite. Let $v\in E^0$ support at least two return paths. Then $\Cst(E_C^v)$ is purely infinite.
\end{proposition}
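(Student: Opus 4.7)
The plan is to verify the three conditions of Theorem~\ref{thm:purely_inf} for $E_C^v$. Condition~\textup{(K)} is Proposition~\ref{pro:cond_(K)}. For the absence of breaking vertices: the new vertices $u_1$ and $u_2$ each emit only finitely many edges and are therefore not infinite emitters. For $w \in E^0$, the set of out-edges in $E_C^v$ agrees with that in $E$, with the sole exception that $v$ acquires one extra edge $f_1$ to~$u_1$. A path-shortening observation — any path through $\{u_1,u_2\}$ in $E_C^v$ must enter via $f_1$ and exit via $f_2$ — yields that a vertex $x \in E^0$ reaches $w \in E^0$ in $E_C^v$ if and only if it does so in~$E$. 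Hence for any infinite emitter $w \in E^0 \setminus \{v\}$, the count of edges from~$w$ to vertices reaching~$w$ is preserved, and so is either zero or infinite by hypothesis. For an infinite emitter $w=v$, the count in~$E_C^v$ exceeds the one in~$E$ by exactly one (contributed by $f_1$, as $u_1 \geq v$ via $f_2$); but the existence of a return path at~$v$ in~$E$ forces this count in~$E$ to be positive, hence infinite, so it remains infinite in~$E_C^v$.

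Next, I would set up a bijection between maximal tails of $E$ and $E_C^v$: a maximal tail of $E$ not containing $v$ is itself a maximal tail of $E_C^v$, while a maximal tail $M$ of $E$ containing $v$ corresponds to $M \cup \{u_1, u_2\}$. This rests on the same path-shortening observation, together with the fact that the first edge of a return path at~$v$ lies in~$E$ and has range in~$M$ (witnessing the regular-vertex emission property for~$v$ within~$E$ whenever $v$ is regular). Granted the bijection, the third condition of Theorem~\ref{thm:purely_inf} for $E_C^v$ follows from the corresponding condition for~$E$: each vertex in $E^0 \cap M$ inherits a connecting path to a cycle in $E^0 \cap M$, while $u_1$ and $u_2$ already lie on the self-loops $h_1$ and $k_2$, which are cycles contained in $\{u_1,u_2\} \subseteq M$.

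The main obstacle is the bookkeeping for the maximal tail correspondence: verifying downward-directedness of $M \cup \{u_1, u_2\}$ for pairs involving one of the $u_i$ (which reduces, via $u_i \geq v$ and $v \in M$, to directedness of $M$ in~$E$), and ensuring that the regular-vertex emission property for $v$ can be witnessed by an edge of~$E$ rather than by the newly added edge~$f_1$. Both points hinge on the assumption that $v$ supports a return path inside~$E$.
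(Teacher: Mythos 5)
Your proposal is correct and follows essentially the same route as the paper: verify the three conditions of Theorem~\ref{thm:purely_inf} for $E_C^v$, using Proposition~\ref{pro:cond_(K)} for Condition~(K), the path-shortening observation (every path between vertices of $E^0$ that enters $\{u_1,u_2\}$ must do so via $f_1$ and leave via $f_2$) to rule out breaking vertices, and the correspondence of maximal tails (the paper phrases the latter simply as ``$M\cap E^0$ is a maximal tail of~$E$'' rather than as a bijection, but the content is the same). The two points you flag as delicate --- that the number of returning edges at~$v$ is positive, hence infinite since $E$ has no breaking vertices, and that the emission condition at~$v$ is witnessed by the first edge of a return path in~$E$ --- are exactly the ones the paper's proof addresses.
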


\begin{proof}
 We must check that the three conditions in Theorem~\ref{thm:purely_inf} are passed on from~$E$ to~$E_C^v$. In Proposition~\ref{pro:cond_(K)}, we have seen that the Cuntz Splice inherits Condition~(K). Clearly no vertex in~$E_C^v$ except~$v$ has a chance of being a breaking vertex. But if~$v$ is regular in~$E$ then it is also regular in~$E_C^v$, and if~$v$ is an infinite emitter in~$E$ then it emits infinitely many edges, both in~$E$ and hence in~$E_C^v$, whose range is equal to~$v$ or supports a path to~$v$. Thus there are no breaking vertices in~$E_C^v$. Finally, let $M$ be a maximal tail in~$E_C^v$. Then $M\cap E^0$ is a maximal tail in~$E$ (in order to check the second condition for the distinguished vertex~$v$, notice that $M\cap E^0$ must completely contain the return paths in~$E$ based at~$v$). This implies that every vertex in~$M\cap E^0$ connects to a cycle in~$M$. If $u_i\in M$ for one (hence both) $i\in\{1,2\}$, then $u_i$ obviously connects to a cycle in~$M$ as well.
\end{proof}

To an admissible pair $(H,B)$ for~$E$, one associates a gauge-invariant ideal $J_{(H,B)}$ in $\Cst(E)$ given as the closed span of a certain set of elements (see \cite{Bates-Hong-Raeburn-Szymanski:Ideal_structure}*{page~6}).

\begin{theorem}[\cite{Bates-Hong-Raeburn-Szymanski:Ideal_structure}*{Theorem 3.6, Corollary 3.10}]
  \label{thm:admissible_pairs}
Let $E$ be a graph satisfying Condition~\textup{(K)}. The assignment $(H,B)\mapsto J_{(H,B)}$ is a bijection from $\AP(E)$ to $\Ideals\bigl(\Cst(E)\bigr)$. One has $J_{(H,B)}\subseteq J_{(H',B')}$ if and only if $H\subseteq H'$ and $B\subseteq H'\cup B'$.
\end{theorem}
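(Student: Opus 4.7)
The plan is to follow the standard two-step strategy for classifying ideals in graph \Cstar{}algebras: first classify all \emph{gauge-invariant} ideals in terms of admissible pairs, and then use Condition~(K) to promote this to a classification of \emph{all} ideals. Concretely, I would begin by constructing the map $(H,B)\mapsto J_{(H,B)}$ explicitly as the closed ideal generated by the vertex projections $\{p_v:v\in H\}$ together with the ``gap projections''
\[
 p_v^H \defeq p_v - \sum_{\substack{e\in s^{-1}(v)\\ r(e)\notin H}} s_e s_e^*, \qquad v\in B,
\]
which are well-defined because each such~$v$ emits only finitely many edges outside~$H$.

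Next I would establish that the data $(H,B)$ can be recovered from $J_{(H,B)}$. The key observation is that the quotient $\Cst(E)/J_{(H,B)}$ is canonically isomorphic to the graph \Cstar{}algebra of a ``quotient graph'' obtained from~$E$ by deleting the vertices in~$H$ and turning the elements of~$B$ into sinks (or more precisely, adjusting their emitted edges accordingly). Applying the gauge-invariant uniqueness theorem to this quotient shows that the images of $p_v$ for $v\notin H$ and of $p_v^H$ for $v\in H_\infty^{\textup{fin}}\setminus B$ remain nonzero, so one recovers $H=\{v\in E^0:p_v\in J_{(H,B)}\}$ and $B=\{v\in H_\infty^{\textup{fin}}:p_v^H\in J_{(H,B)}\}$. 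This proves injectivity of the assignment, and moreover a direct comparison of the generating sets yields the order-theoretic statement $J_{(H,B)}\subseteq J_{(H',B')}\iff H\subseteq H'\text{ and }B\subseteq H'\cup B'$.

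For surjectivity I would argue in two stages. First, given an arbitrary gauge-invariant ideal $I$, set $H\defeq\{v\in E^0:p_v\in I\}$ and check that $H$ is hereditary (using $s_e^*p_{s(e)}s_e=p_{r(e)}$) and saturated (using the Cuntz--Krieger relation at regular vertices). Then set $B\defeq\{v\in H_\infty^{\textup{fin}}:p_v^H\in I\}$. One verifies $I=J_{(H,B)}$ by applying the gauge-invariant uniqueness theorem to the induced injection $\Cst(E)/J_{(H,B)}\prto\Cst(E)/I$, checking that every generator survives in the quotient. The second stage is to remove the gauge-invariance hypothesis: assuming Condition~(K), every ideal of $\Cst(E)$ is automatically gauge-invariant. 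This follows from the Cuntz--Krieger uniqueness theorem for graph \Cstar{}algebras under Condition~(K), which guarantees that any \Star{}homomorphism nonzero on the vertex projections is injective, and hence that any ideal is determined by its intersection with the diagonal subalgebra spanned by the~$p_v$ and their gap analogues.

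The main obstacle is clearly the combined invocation of the gauge-invariant and Cuntz--Krieger uniqueness theorems in the presence of infinite emitters: the gap projections $p_v^H$ must be handled with care, since they are not vertex projections but must nonetheless be detected by the uniqueness theorem after passing to the appropriate quotient graph. A clean route around this is to model the relative situation using the desingularized graph or Katsura's Cuntz--Pimsner picture, so that the additional generators $p_v^H$ appear naturally as vertex projections in an auxiliary graph, reducing the analysis to the well-understood case of graphs with only finitely (or no) breaking vertices.
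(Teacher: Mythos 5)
The paper does not prove this statement at all: it is quoted verbatim from Bates--Hong--Raeburn--Szyma\'nski (Theorem~3.6 and Corollary~3.10 of the cited reference), so there is no in-paper argument to compare against. Your outline is essentially the standard proof from that reference --- the same generators $p_v$ and gap projections $p_v^H$, the same recovery of $(H,B)$ from the ideal via the quotient graph and the gauge-invariant uniqueness theorem, and the same two-stage surjectivity argument --- and it is correct in structure. The one place to be careful is the final step: the Cuntz--Krieger uniqueness theorem needs Condition~(L) (every cycle has an exit), and the reason Condition~(K) is the right hypothesis is that it is inherited by (equivalently, implies Condition~(L) for) every quotient graph $E/(H,B)$, so that the quotient map $\Cst(E)/J_{(H,B)}\to\Cst(E)/I$ is injective for \emph{every} ideal $I$, not just gauge-invariant ones; your phrasing slightly conflates (K) with (L) at the level of $E$ itself, but the intended argument is the standard and correct one.
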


We equip the set $\AP(E)$ with the partial ordering~$\leq$ described in the theorem.

\begin{proposition}
  \label{pro:ideals}
Let $E$ be a graph satisfying Condition~\textup{(K)}. Let $v\in E^0$ support at least two return paths. Then
\[
(H,B)\mapsto\begin{cases}(H,B)&\textup{if $v\not\in H$,}\\ (H\cup\{u_1,u_2\},B)&\textup{if $v\in H$}\end{cases}
\]
defines an order isomorphism $\AP(E)\to\AP(E_C^v)$.
\end{proposition}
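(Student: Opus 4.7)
The plan is to verify, in order, that the stated assignment (i) is well-defined into $\AP(E_C^v)$, (ii) is a bijection, and (iii) preserves the ordering $\leq$ in both directions. The entire argument rests on one structural observation: the three vertices $v$, $u_1$, $u_2$ form a strongly connected cluster in $E_C^v$, since $v\geq u_1\geq v$ via $f_1,f_2$ and $u_1\geq u_2\geq u_1$ via $h_2,k_1$. Consequently, for any hereditary subset $H'\subseteq (E_C^v)^0$, the intersection $H'\cap\{v,u_1,u_2\}$ is either empty or all of $\{v,u_1,u_2\}$. This at once dictates the shape of the inverse map: send $(H',B')$ to $(H'\setminus\{u_1,u_2\},B')$.

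For well-definedness I would treat the two cases separately. When $v\notin H$, hereditariness of $H$ in $E_C^v$ is immediate (no new edges originate from vertices in $H$), and saturation follows because the only regular vertices added, namely $u_1$ and $u_2$, each emit an edge with range outside $H$ (via $f_2$ to $v$, respectively via $k_i$ to itself), so the saturation hypothesis is vacuous for them. The condition $B\subseteq H_\infty^{\textup{fin}}$ is preserved because the only edges added at vertices of $B\subseteq E^0\setminus H$ could be at $v$, and these go to $u_1\in (E_C^v)^0\setminus H$, contributing at most one extra edge, keeping the number finite and non-zero. When $v\in H$, hereditariness of $H\cup\{u_1,u_2\}$ holds since all ranges of the new edges lie in $\{v,u_1,u_2\}\subseteq H\cup\{u_1,u_2\}$; saturation reduces, for vertices in $E^0$, to saturation of $H$ in $E$; and the set $B$, being contained in $E^0\setminus H$, is unaffected by the enlargement since $(H\cup\{u_1,u_2\})_\infty^{\textup{fin}}\cap E^0=H_\infty^{\textup{fin}}$.

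The inverse is then obtained by the recipe above. To see it is well-defined into $\AP(E)$, one checks that if $u_1\in H'$ then $v\in H'$ (from $u_1\geq v$) and hence $\{v,u_1,u_2\}\subseteq H'$, so $H'\setminus\{u_1,u_2\}$ is a hereditary and saturated subset of $E^0$ containing~$v$; and if $u_1\notin H'$, then $v\notin H'$ (otherwise $f_1$ would force $u_1\in H'$), so $H'\subseteq E^0$ already. In either case, $B'$ stays contained in $E^0$ because $u_1$ and $u_2$ are finite emitters, hence cannot lie in $B'$. That these maps are mutually inverse is then immediate from the case analysis.

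The final step, order preservation, is a routine case check using the characterization $H\subseteq H'$ and $B\subseteq H'\cup B'$ from Theorem~\ref{thm:admissible_pairs}; the only non-trivial point is that whenever $v\in H$ but $v\notin H'$ is impossible when $(H,B)\leq(H',B')$, so that the two cases in the definition of the map interact coherently. I expect the main obstacle to be bookkeeping in the saturation verification when $v\notin H$, specifically ensuring that the two new regular vertices $u_1$ and $u_2$ do not accidentally force themselves into the saturation of $H$ in $E_C^v$; this is precisely where one uses that each of $u_1,u_2$ emits at least one edge whose range is not in $H$.
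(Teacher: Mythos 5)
Your proposal is correct and takes essentially the same route as the paper's (much terser) proof: the paper simply asserts that $H\mapsto\bar H$ is a bijection on hereditary and saturated subsets and that $\bar H_\infty^{\textup{fin}}=H_\infty^{\textup{fin}}$, and then checks order preservation via the criterion of Theorem~\ref{thm:admissible_pairs} using $B\cap\{u_1,u_2\}=\emptyset$, exactly as you do. Your write-up supplies more detail than the paper (the mutual reachability of $v,u_1,u_2$ and the explicit inverse), and the only point both treatments leave implicit is that saturation of $H'$ in $E$ at the vertex $v$ itself, for the inverse map when $v\notin H'$, ultimately relies on the return paths at~$v$.
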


\begin{proof}
Firstly, the assignment
\[
H\mapsto \bar H\defeq\begin{cases}H&\textup{if $v\not\in H$,}\\ H\cup\{u_1,u_2\}&\textup{if $v\in H$}\end{cases}
\]
is a bijection from the hereditary (and saturated) subsets of~$E$ to the hereditary (and saturated) subsets of~$E_C^v$. Moreover, the set ${\bar H}_\infty^{\textup{fin}}$ (formed in~$E_C^v$) coincides with the set $H_\infty^{\textup{fin}}$ (formed in~$E$). This gives the desired bijection.

Let $(H,B)$ and $(H',B')$ be admissible pairs for~$E$. Then, by definition, $(\bar H,B)\leq (\bar{H'},B)$ if and only if $\bar H\subseteq \bar{H'}$ and $B\subseteq \bar{H'}\cup B'$. The condition $\bar H\subseteq \bar{H'}$ is equivalent to $H\subseteq H'$. Moreover, since $B\cap\{u_1,u_2\}=\emptyset$, the condition $B\subseteq \bar{H'}\cup B'$ is equivalent to $B\subseteq H'\cup B'$. Hence $(\bar H,B)\leq (\bar{H'},B)$ if and only if $(H,B)\leq (H',B)$.
\end{proof}

\begin{corollary}
 \label{cor:ideals}
Let $E$ be a graph satisfying Condition~\textup{(K)}. Let $v\in E^0$ support at least two return paths. Then
\[
J_{(H,B)}\mapsto\begin{cases}J_{(H,B)}&\textup{if $v\not\in H$,}\\ J_{(H\cup\{u_1,u_2\},B)}&\textup{if $v\in H$}\end{cases}
\]
defines an order isomorphism $\Ideals\bigl(\Cst(E)\bigr)\to\Ideals\bigl(\Cst(E_C^v)\bigr)$.
\end{corollary}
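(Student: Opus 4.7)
The plan is to obtain the corollary as a direct consequence of the two preceding results: Theorem~\ref{thm:admissible_pairs}, which identifies ideal lattices with lattices of admissible pairs for graphs satisfying Condition~(K), and Proposition~\ref{pro:ideals}, which already provides the desired order isomorphism at the level of admissible pairs. There is essentially no new content to prove; the task is just to transport the bijection $(H,B)\mapsto(\bar H, B)$ across the identifications given by Theorem~\ref{thm:admissible_pairs}.

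First, I would apply Proposition~\ref{pro:cond_(K)} to conclude that $E_C^v$ also satisfies Condition~(K), so that Theorem~\ref{thm:admissible_pairs} applies both to~$E$ and to~$E_C^v$. This yields order isomorphisms $\AP(E)\to\Ideals\bigl(\Cst(E)\bigr)$ and $\AP(E_C^v)\to\Ideals\bigl(\Cst(E_C^v)\bigr)$, each sending $(H,B)$ to~$J_{(H,B)}$.

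Next, I would invert the first of these isomorphisms, compose with the order isomorphism $\AP(E)\to\AP(E_C^v)$ from Proposition~\ref{pro:ideals}, and then compose with the second. Chasing through the definitions, the resulting composite sends $J_{(H,B)}$ to $J_{(\bar H, B)}$, which is exactly the formula in the statement. Being a composition of order isomorphisms, it is itself an order isomorphism.

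There is no genuine obstacle: the corollary is a purely formal consequence of Theorem~\ref{thm:admissible_pairs} and Propositions~\ref{pro:cond_(K)} and~\ref{pro:ideals}, and the only thing to verify is that the formula obtained by chasing the composite through the definitions agrees with the one given in the statement.
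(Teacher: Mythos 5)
Your proposal is correct and follows exactly the paper's route: the paper's proof is the one-line ``Combine Theorem~\ref{thm:admissible_pairs} and Proposition~\ref{pro:ideals}'', and your write-up merely spells out that composition (including the sensible extra remark that Proposition~\ref{pro:cond_(K)} is needed so that Theorem~\ref{thm:admissible_pairs} applies to $E_C^v$).
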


\begin{proof}
 Combine Theorem~\ref{thm:admissible_pairs} and Proposition~\ref{pro:ideals}.
\end{proof}

\section{The classification invariant for purely infinite graph \Cstar{}algebras}
  \label{sec:the_invariant}

In this section, we discuss the invariant~$\XK\delta$, which was shown to be a complete (strong) classification invariant for purely infinite graph \Cstar{}algebras with finitely many ideals in~\cite{Bentmann-Meyer:More_general} (using results of Kirchberg~\cite{Kirchberg:Michael}).

For this purpose, we first need to recall the invariant~$\XK$ introduced in~\cite{Bentmann:Real_rank_zero_and_int_cancellation}. Let $X$ be a finite $T_0$\nb-space; it carries a partial ordering called the \emph{spezialization preorder} defined such that $x\geq y$ if and only if $U_x\subseteq U_y$, where $U_x$ denotes the smallest open neighbourhood of the point~$x\in X$. Recall that if~$A$ is a \Cstar{}algebra over~$X$ as defined in~\cite{Meyer-Nest:Bootstrap} (that is, it is equipped with a continuous map $\Prim(A)\to X$) then every open subset~$U$ of~$X$ gives rise to an ideal~$A(U)$ of~$A$. Moreoever, if $U\subseteq V\subseteq X$ are open subsets, we have an ideal inclusion $\iota_U^V\colon A(U)\subseteq A(V)$.

\begin{definition}
  \label{def:XK}
 For a \Cstar{}algebra $A$ over~$X$, the invariant $\XK(A)$ consists of the collection of $\Z/2$-graded Abelian groups $\K_*\bigl(A(U_x)\bigr)$ for $x\in X$ together with the collection of graded group homomorphisms $\K_*\bigl(\iota_{U_x}^{U_y}\bigr)$ for $x\geq y$.
\end{definition}

The assignment~$\XK$ becomes a functor from the category of (separable) \Cstar{}al\-ge\-bras over~$X$ to the category of $X$\nb-diagrams in (countable) $\Z/2$-graded Abelian groups or, equivalently, to the category of (countable) $\Z/2$-graded modules over the integral incidence algebra of the partially ordered set~$X$ (see \cite{Bentmann:Real_rank_zero_and_int_cancellation}*{\S4} for more details). If $M$ is an $X$\nb-diagram as above, we will denote the $\Z/2$-graded Abelian group corresponding to the point~$x\in X$ by~$M_x$ and the homomorphism $M_x\to M_y$ for $x\geq y$ by~$M_{x\to y}$. Thus Definition~\ref{def:XK} says that $\XK(A)_x=\K_*\bigl(A(U_x)\bigr)$ and $\XK(A)_{x\to y}=\K_*\bigl(\iota_{U_x}^{U_y}\bigr)$.

The invariant $\XK\delta(A)$ is only defined for \Cstar{}algebras $A$ over~$X$ such that the projective dimension of the module~$\XK(A)$ is at most~$2$ (see~\cite{Bentmann-Meyer:More_general}*{\S2}). This was shown in~\cite{Bentmann-Meyer:More_general}*{\S5.3} to be the case whenever $A$ is a graph \Cstar{}algebra with primitive ideal space~$X$. Then $\XK\delta(A)$ consists of the invariant $\XK(A)$ equipped with the additional structure of a so-called \emph{obstruction class}~$\delta_A$, which is an element of the $\Z/2$\nb-graded Abelian group 
\begin{equation}
  \label{eq:ext2}
\Ext^2\bigl(\XK(A),\XK(A)[-1]\bigr),
\end{equation}
where $[-1]$ signifies a degree-shift just like in the $\Ext^1$-term in the universal coefficient sequence. Notice that this $\Ext^2$\nb-group is formed in the Abelian target category of~$\XK$ described above. An isomorphism $\XK\delta(A)\cong\XK\delta(A')$ is then simply an isomorphism $\XK(A)\cong\XK(A')$ such that the induced identification
\[
 \Ext^2\bigl(\XK(A),\XK(A)[-1]\bigr)\cong\Ext^2\bigl(\XK(A'),\XK(A')[-1]\bigr)
\]
takes the element~$\delta_A$ to the element~$\delta_{A'}$.

We will omit the general, intrinsic definition of the obstruction class~$\delta_A$ given in \cite{Bentmann-Meyer:More_general}*{Definition~2.16} giving preference to a more explicit description resulting from \cite{Bentmann-Meyer:More_general}*{\S5} in the case that $A=\Cst(E)$ is a graph \Cstar{}algebra. Since the module $\XK_1\bigl(\Cst(E)\bigr)$ is always projective by \cite{Bentmann-Meyer:More_general}*{Lemma~5.18}, one summand of the group \eqref{eq:ext2} vanishes. Elements of the second summand $\Ext^2\bigl(\XK_0(A),\XK_1(A)\bigr)$ can be described as equivalence classes of length-two extensions of $\XK_0(A)$ by $\XK_1(A)$ (see for instance~\cite{MacLane:Homology}*{III.~\S5}). As was shown in \cite{Bentmann-Meyer:More_general}*{Theorem~5.19}, the obstruction class~$\delta_{\Cst(E)}$ is represented by the dual Pimsner--Voiculescu sequence
\[
 0\to\XK_1(A)\to\XK_0(A\rtimes_\gamma\T)
 \xrightarrow{\hat\gamma(1)_*^{-1}-\mathrm{id}}
 \XK_0(A\rtimes_\gamma\T)\to\XK_0(A)\to 0,
\]
where $\gamma$ denotes the canonical gauge action on $A=\Cst(E)$. (As long as one is only interested in an exact sequence, the map in the middle of this sequence is only determined up to a factor of~$\pm 1$. Changing the sign however also replaces the represented $\Ext^2$\nb-class by its additive inverse. Hence there is good reason for our choice of convention in the given setting.)

Recall that \(\Cst(E)\rtimes_\gamma\T\) is the graph \(\Cst\)\nb-algebra of the \emph{skew-product} graph \(E\times_1\Z\) (see \cite{Raeburn-Szymanski:CK_algs_of_inf_graphs_and_matrices}*{\S~3} and \cite{Bates-Hong-Raeburn-Szymanski:Ideal_structure}*{\S~6}). The discussion in \cite{Bentmann-Meyer:More_general}*{\S5} also shows that we can replace $\Cst(E\times_1\Z)$ with the subalgebra $\Cst(E\times_1 N)$ where \(N=\{n\in\Z\mid n\leq 0\}\). This is an improvement because the $\K$\nb-theory of $\Cst(E\times_1 N)$ is more manageable than the one of $\Cst(E\times_1\Z)$. By \cite{Bentmann-Meyer:More_general}*{Theorem~5.3 and (5.17)}, the resulting length-two extension
\[
 \XK_1\bigl(\Cst(E)\bigr)\rightarrowtail\XK_0\bigl(\Cst(E\times_1 N)\bigr)
 \xrightarrow{S-\mathrm{id}}
 \XK_0\bigl(\Cst(E\times_1 N)\bigr)
 \twoheadrightarrow\XK_0\bigl(\Cst(E)\bigr)
\]
still represents the class $\delta_{\Cst(E)}\in\Ext^2\Bigl(\XK_0\bigl(\Cst(E)\bigr),\XK_1\bigl(\Cst(E)\bigr)\Bigr)$. Here $S$ denotes the map induced by the shift map $(e,n)\mapsto (e,n-1)$ on $E\times_1 N$.

\begin{definition}
We define the length-one chain complex
\[
 \Psf_\bullet^E=(P_1^E\xrightarrow{\varphi^E} P_0^E)\defeq\Bigl(\XK_0\bigl(\Cst(E\times_1 N)\bigr)\xrightarrow{S-\mathrm{id}}\XK_0\bigl(\Cst(E\times_1 N)\bigr)\Bigr)
\]
\end{definition}

We observe that the chain complex~$\Psf_\bullet^E$ carries all information about the invariant $\XK\delta\bigl(\Cst(E)\bigr)$: we can recover $\XK_1\bigl(\Cst(E)\bigr)$ and $\XK_0\bigl(\Cst(E)\bigr)$ as the first and zeroth homology modules of this complex; the obstruction class is then represented by the sequence
\[
 \ker(\varphi)\rightarrowtail P_1^E\xrightarrow{\varphi^E} P_0^E\twoheadrightarrow\coker(\varphi).
\]
As a consequence, we obtain the following criterion for isomorphism on the invariant~$\XK\delta$. Recall that a quasi-isomorphism is a chain map inducing isomorphisms on homology in each degree.

\begin{proposition}
 \label{pro:iso_criterion}
Let $E_1$, $E_2$ be graphs such that $\Prim\bigl(\Cst(E_i)\bigr)\cong X$ for both~$i$. If the chain complexes $\Psf_\bullet^{E_1}$ and~$\Psf_\bullet^{E_2}$ are quasi-isomorphic then $\XK\delta\bigl(\Cst(E_1)\bigr)\cong\XK\delta\bigl(\Cst(E_2)\bigr)$.
\end{proposition}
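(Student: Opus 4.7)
The plan is to unpack the content of $\XK\delta$---namely, the underlying $\Z X$\nb-module together with the obstruction class---and to verify that a given quasi-isomorphism $f_\bullet=(f_1,f_0)\colon\Psf_\bullet^{E_1}\to\Psf_\bullet^{E_2}$ matches both pieces of data compatibly.

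First, I would observe that, as $f_\bullet$ intertwines $\varphi^{E_1}$ and $\varphi^{E_2}$, it restricts on kernels to a morphism $\beta\colon\XK_1\bigl(\Cst(E_1)\bigr)\to\XK_1\bigl(\Cst(E_2)\bigr)$ and descends on cokernels to a morphism $\alpha\colon\XK_0\bigl(\Cst(E_1)\bigr)\to\XK_0\bigl(\Cst(E_2)\bigr)$; both are isomorphisms by hypothesis. This already yields $\XK\bigl(\Cst(E_1)\bigr)\cong\XK\bigl(\Cst(E_2)\bigr)$.

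Next, I would pass to the obstruction classes. By the description recalled just before the definition of $\Psf_\bullet^E$, the class $\delta_{\Cst(E_i)}\in\Ext^2\bigl(\XK_0(\Cst(E_i)),\XK_1(\Cst(E_i))\bigr)$ is represented by the length-two extension
$$
\ker(\varphi^{E_i})\rightarrowtail P_1^{E_i}\xrightarrow{\varphi^{E_i}} P_0^{E_i}\twoheadrightarrow\coker(\varphi^{E_i}),
$$
and the other summand of \eqref{eq:ext2} vanishes because $\XK_1(\Cst(E_i))$ is projective. The chain map $f_\bullet$ together with the induced outer maps $\beta$ and $\alpha$ assembles into a commutative ladder of four squares whose top and bottom rows are precisely these two length-two extensions. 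Invoking the Yoneda description of $\Ext^2$ as in \cite{MacLane:Homology}*{III.~\S5}, any such ladder (even with non-identity outer vertical maps) witnesses the identity
$$
\beta_*\bigl(\delta_{\Cst(E_1)}\bigr)=\alpha^*\bigl(\delta_{\Cst(E_2)}\bigr)\in\Ext^2\bigl(\XK_0(\Cst(E_1)),\XK_1(\Cst(E_2))\bigr),
$$
which is exactly the requirement that the $\Ext^2$\nb-iso\-morphism induced by $\alpha$ and $\beta$ sends $\delta_{\Cst(E_1)}$ to $\delta_{\Cst(E_2)}$. Combining this with the module isomorphism produces the desired identification $\XK\delta\bigl(\Cst(E_1)\bigr)\cong\XK\delta\bigl(\Cst(E_2)\bigr)$.

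The only substantive point to verify is the Yoneda-equivalence assertion in the abelian target category of $\XK$, i.e., the category of $\Z/2$\nb-graded modules over the integral incidence algebra $\Z X$; but this is a formal consequence of functoriality of Yoneda extensions under pushforward and pullback, which carries over unchanged from the classical setting since $\Z X$\nb-modules form an abelian category with enough projectives. I therefore do not expect any genuine obstacle beyond making this standard argument explicit.
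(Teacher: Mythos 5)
Your proof is correct and takes essentially the same route as the paper's: both extract the induced isomorphisms on kernel and cokernel from the quasi-isomorphism and then read off the identification of the obstruction classes from the resulting commutative ladder of length-two extensions, via the Yoneda-congruence criterion from Mac Lane (the paper cites III.~Proposition~5.2 for precisely the step you single out as the only substantive point). No gaps.
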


\begin{proof}
A quasi-isomorphism $\psi_\bullet\colon\Psf_\bullet^{E_1}\to\Psf_\bullet^{E_2}$ gives rise to a commutative diagram
\[
  \begin{tikzpicture}[baseline=(current bounding box.west)]
    \matrix(m)[cd,column sep=4em,text height=2.0ex]{
      \XK_1\bigl(\Cst(E_1)\bigr)&P_1^{E_1}&P_0^{E_1}&\XK_0\bigl(\Cst(E_1)\bigr)\\
      \XK_1\bigl(\Cst(E_2)\bigr)&P_1^{E_2}&P_0^{E_2}&\XK_0\bigl(\Cst(E_2)\bigr)\\
    };
    \begin{scope}[cdar]
      \draw[>->] (m-1-1) -- (m-1-2);
      \draw (m-1-2) -- node {\(\varphi^{E_1}\)} (m-1-3);
      \draw[->>] (m-1-3) -- (m-1-4);
      \draw[>->] (m-2-1) -- (m-2-2);
      \draw (m-2-2) -- node {\(\varphi^{E_2}\)} (m-2-3);
      \draw[->>] (m-2-3) -- (m-2-4);
      \draw[dotted] (m-1-1) -- node[swap] {\(\cong\)} (m-2-1);
      \draw (m-1-2) -- node[swap] {\(\psi_1\)}  (m-2-2);
      \draw (m-1-3) -- node {\(\psi_0\)} (m-2-3);
      \draw[dotted] (m-1-4) -- node {\(\cong\)} (m-2-4);
    \end{scope}
  \end{tikzpicture}
\]
The dotted arrows thus yield an isomorphism $\XK\bigl(\Cst(E_1)\bigr)\cong\XK\bigl(\Cst(E_2)\bigr)$. As we have seen above, the two rows represent the obstruction classes~$\delta_{\Cst(E_1)}$ and~$\delta_{\Cst(E_2)}$, respectively. The commutative diagram  thus shows that the obtained dotted isomorphism identifies the obstruction classes (the two $\Ext$\nb-classes coincide by \cite{MacLane:Homology}*{III.~Proposition~5.2}).
\end{proof}

\section{The regular case}
\label{sec:reg}

In this section, we establish the desired result for the case that every vertex in the graph is regular:

\begin{theorem}[]
\label{thm:reg}
Let $E$ be a graph with no sinks and no infinite emitters. Assume that $\Cst(E)$ is purely infinite and has finitely many ideals. Let $v$ be a vertex of~$E$ supporting at least two return paths. Then $\Cst(E)\otimes\Comp\cong\Cst(E_C^{v})\otimes\Comp$.
\end{theorem}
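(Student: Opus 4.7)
The plan is to apply Proposition~\ref{pro:iso_criterion}: since $\Cst(E_C^v)$ remains in the classifiable class by Proposition~\ref{pro:purely_inf} and Corollary~\ref{cor:ideals}, it suffices to exhibit a quasi-isomorphism of length-one $\Z X$\nb-module complexes $\Psf_\bullet^E\to\Psf_\bullet^{E_C^{v}}$, where $X\cong\Prim\bigl(\Cst(E)\bigr)\cong\Prim\bigl(\Cst(E_C^v)\bigr)$. Under the standing regularity hypothesis the skew-product $E\times_1 N$ is also regular, and by~\cite{Bentmann-Meyer:More_general}*{\S5} the modules $P_0^E=P_1^E$ may be identified with the free $\Z X$\nb-module on~$E^0$ (graded so that each generator $[w]$ sits in the component determined by the primitive ideal of~$w$) with boundary $\varphi^E=(\Asf^E-\UNIT)^\trans$. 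The same description applies to $E_C^v$ with basis indexed by $E_C^0=E^0\sqcup\{u_1,u_2\}$, and Proposition~\ref{pro:ideals} ensures that $u_1$ and~$u_2$ lie in the same graded component as~$v$.

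A direct inspection of the added edges shows that, with respect to the decomposition $E_C^0=E^0\sqcup\{u_1,u_2\}$, the new boundary has the block form
\[
\varphi^{E_C^v}=\begin{pmatrix}\varphi^E & J\\ K & D\end{pmatrix},\qquad D=\begin{pmatrix}0&1\\1&0\end{pmatrix},
\]
where $J$ and~$K$ each have a single nonzero entry equal to~$1$, coupling~$v$ with~$u_1$. The key observation is that $D$ is a permutation matrix, hence lies in $\mathrm{GL}_2(\Z)$. Block row-reduction by left-multiplication with $\bigl(\begin{smallmatrix}I&-JD^{-1}\\0&I\end{smallmatrix}\bigr)$ followed by block column-reduction by right-multiplication with $\bigl(\begin{smallmatrix}I&0\\-D^{-1}K&I\end{smallmatrix}\bigr)$ transforms $\varphi^{E_C^v}$ into $\varphi^E\oplus D$; the upper-left block is unaltered because $JD^{-1}K=0$, as one verifies from the locations of the single nonzero entries in $J$ and~$K$.

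These two basis changes each modify only one pair of basis vectors within the graded component shared by $v$, $u_1$, $u_2$ (in $P_0^{E_C^v}$ the vector of~$v$ is added to that of~$u_2$; in $P_1^{E_C^v}$ the vector of~$u_2$ is subtracted from that of~$v$), so both are genuine $\Z X$\nb-module automorphisms. After performing them, $\Psf_\bullet^{E_C^v}$ becomes the direct sum of $\Psf_\bullet^E$ with the acyclic two-term complex $\Z\{u_1,u_2\}\xrightarrow{D}\Z\{u_1,u_2\}$, and the inclusion $\Psf_\bullet^E\hookrightarrow\Psf_\bullet^{E_C^v}$ is the desired quasi-isomorphism. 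The only conceptual hurdle is locating the unimodular block~$D$; everything else is elementary integer linear algebra that automatically respects the $\Z X$\nb-grading.
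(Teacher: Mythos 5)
Your argument is correct and establishes exactly the quasi-isomorphism the paper needs, but it gets there by a genuinely different route. The paper does not diagonalize: it writes down an explicit chain map $\psi_\bullet\colon\Psf_\bullet^{E_C^{v}}\to\Psf_\bullet^E$ (whose degree-zero component sends the $u_2$\nb-generator to $-e_v$ and kills the $u_1$\nb-generator) and then verifies pointwise, by explicit element chases, that the induced maps on $\ker(\varphi_x)$ and $\coker(\varphi_x)$ are bijective. You instead observe that the corner block $D=\left(\begin{smallmatrix}0&1\\1&0\end{smallmatrix}\right)$ is unimodular and that $JD^{-1}K=0$ (correct: $J$ couples $v$ only to $u_1$ while $D^{-1}K$ couples $v$ only to $u_2$), so two elementary block operations split $\Psf_\bullet^{E_C^{v}}$ as $\Psf_\bullet^E$ plus an acyclic summand, making the homology computation automatic. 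The two proofs are secretly the same map: composing your basis changes with the projection killing $u_1,u_2$ reproduces precisely the paper's $\psi_0$ and $\psi_1$; what your packaging buys is the elimination of the kernel/cokernel element chase, at the cost of having to justify that the elementary operations are $\Z X$\nb-module automorphisms --- which you do correctly, since $u_1,u_2$ lie in exactly the same components $H'_x$ as $v$ by Proposition~\ref{pro:ideals}, and the paper's componentwise description of $(P_i^E)_x\cong\Z^{H_x}$ with inclusion structure maps matches your global free-module picture. Two immaterial quibbles: your first basis change is $e_{u_2}\mapsto e_{u_2}-e_v$, so the vector of $v$ is subtracted from (not added to) that of $u_2$, a sign that affects nothing; and the final passage from $\XK\delta\bigl(\Cst(E)\bigr)\cong\XK\delta\bigl(\Cst(E_C^{v})\bigr)$ to $\Cst(E)\otimes\Comp\cong\Cst(E_C^{v})\otimes\Comp$ still needs the explicit citation of the classification theorem of Bentmann--Meyer, which your opening sentence only invokes implicitly.
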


\begin{proof}
The \Cstar{}algebra $\Cst(E_C^{v})$ is purely infinite by Proposition~\ref{pro:purely_inf}. Corollary~\ref{cor:ideals} provides a homeomorphism $\Prim\bigl(\Cst(E_C^{v})\bigr)\to\Prim\bigl(\Cst(E)\bigr)\eqdef X$. By \cite{Bentmann-Meyer:More_general}*{Theorem~5.19}, it suffices to show that $\XK\delta\bigl(\Cst(E)\bigr)\cong\XK\delta\bigl(\Cst(E_C^{v})\bigr)$, which follows from Proposition~\ref{pro:iso_criterion} once we establish a quasi-isomorphism $\Psf_\bullet^{E_C^{v}}\to\Psf_\bullet^E$.

We describe the chain complexes~$\Psf_\bullet^E$ and~$\Psf_\bullet^{E_C^{v}}$ in more detail. Both graphs~$E$ and~$E_C^{v}$ satisfy Condition (K) by Theorem~\ref{thm:purely_inf}. Since there are no infinite emitters, Theorem~\ref{thm:admissible_pairs} provides order isomorphisms $\HS(E)\to\Ideals\bigl(\Cst(E)\bigr)$ and $\HS(E_C^{v})\to\Ideals\bigl(\Cst(E_C^{v})\bigr)$ given by $H\mapsto J_H\defeq J_{(H,\emptyset)}$, and Corollary~\ref{cor:ideals} shows that
\[
J_{H}\mapsto\begin{cases}J_{H}&\textup{if $v\not\in H$,}\\ J_{H\cup\{u_1,u_2\}}&\textup{if $v\in H$}\end{cases}
\]
is a well-defined order isomorphism $\Ideals\bigl(\Cst(E)\bigr)\to\Ideals\bigl(\Cst(E_C^{v})\bigr)$.

Given a point $x\in X$, we let $H_x\in\HS(E)$ and $H_x'\in\HS(E_C^{v})$ denote the hereditary and saturated subsets such that $\Cst(E)(U_x)=J_{H_x}$ and $\Cst(E_C^{v})(U_x)=J_{H_x'}$. By \cite{Raeburn-Szymanski:CK_algs_of_inf_graphs_and_matrices}*{Theorem~3.2}, for $i\in\{0,1\}$, we have
\[
 (P_i^E)_x = \XK_0\bigl(\Cst(E\times_1 N)\bigr)_x = \K_0\bigl(\Cst(E\times_1 N)(U_x)\bigr) \cong \K_0\bigl(\Cst(H_x\times_1 N)\bigr)\cong\Z^{H_x}
\]
and, similarly, $(P_i^{E_C^{v}})_x \cong\Z^{H_x'}$. If $x\geq y$, the map $(P_i^E)_{x\to y}$ is simply the inclusion $\Z^{H_x}\hookrightarrow\Z^{H_y}$ and analogously for $(P_i^{E_C^{v}})_{x\to y}$. Finally, again by \cite{Raeburn-Szymanski:CK_algs_of_inf_graphs_and_matrices}*{Theorem~3.2}, the components $\varphi^E_x\colon (P_1^E)_x\to (P_0^E)_x$ of the module map~$\varphi^E$ are given, under the identifications above, by
\[
 \Z^{H_x}\xrightarrow{(\Asf^E_x-\UNIT)^\trans}\Z^{H_x},
\]
where $\Asf^E_x$ denotes the restriction of the adjacency matrix~$\Asf^E$ to the index set $H_x\subseteq E^0$ (in both the rows and the columns), and $\UNIT$ denotes the identity matrix of the appropriate size. Again, the analogous formula holds for the map~$\varphi_x^{E_C^{v}}$. Here we think of elements of $\Z^{H_x}$ as column vectors and view the matrix $(\Asf^E_x-\UNIT)^\trans$ as a map via matrix multiplication from the left, et cetera. Hence we have implicitly chosen enumerations of the sets~$H_x$ and~$H_x'$. In the following, we assume, if $v\in H_x$, that these enumerations have been chosen to be of the form $H_x=(v,v_1,v_2,\ldots)$ and $H_x'=(u_2,u_1,v,v_1,v_2,\ldots)$, where $(v_1,v_2,\ldots)$ is an arbitrary enumeration of $H_x\setminus\{v\}$. Thus the matrix $\Asf_x^{E_C^v}$ has the following specific block form:
\[
\scalebox{1.5}{$\Asf_x^{E_C^v}$}=
\begin{pmatrix}
\begin{matrix}1&1\\1&1\end{matrix}&
\begin{matrix}0&0&\cdots\\1&0&\cdots\end{matrix}\\
\begin{matrix}0&1\\0&0\\\vdots&\vdots\end{matrix}&
\scalebox{1.5}{$\Asf_x^E$}
\end{pmatrix}.
\]
 
Having fully described the chain complexes~$\Psf_\bullet^E$ and~$\Psf_\bullet^{E_C^{v}}$, we will now define a chain map $\psi_\bullet\colon\Psf_\bullet^{E_C^{v}}\to\Psf_\bullet^E$ and verify that it is a quasi-isomorphism. For this, we need to specify, for every $x\in X$ and $i\in\{0,1\}$, a group homomorphism $(\psi_i)_x\colon (P_i^{E_C^{v}})_x\to (P_i^{E})_x$ such that all face squares of the cube in Figure~\ref{fig:commuting_cube} commute whenever $x\geq y$.
\begin{figure}[htbp]
\[
\begin{tikzpicture}[
cross line/.style={preaction={draw=white, -,
line width=6pt}}]
\matrix (m) [matrix of math nodes,
row sep=3em, column sep=3em,
text height=2.2ex,
text depth=0.25ex]{
& (P_1^{E_C^{v}})_x & & (P_0^{E_C^{v}})_x \\
(P_1^E)_x
& & (P_0^E)_x
\\
& (P_1^{E_C^{v}})_y
& & (P_0^{E_C^{v}})_y
\\
(P_1^E)_y
& & (P_0^E)_y
\\
};
\path[->]
(m-1-2) edge node[auto] {$\varphi^{E_C^{v}}_x$} (m-1-4)
edge node[above left] {$(\psi_1)_x$} (m-2-1)
edge node[below left=5pt] {$(P_1^{E_C^{v}})_{x\to y}$} (m-3-2)
(m-1-4) edge node[auto] {$(P_0^{E_C^{v}})_{x\to y}$} (m-3-4)
edge node[auto] {$(\psi_0)_x$} (m-2-3)
(m-2-1) edge [cross line] node[above right=2pt] {$\varphi^E_x$} (m-2-3)
edge node[left] {$(P_1^E)_{x\to y}$} (m-4-1)
(m-3-2) edge node[below left=5pt] {$\varphi^{E_C^{v}}_y$} (m-3-4)
edge node[auto] {$(\psi_1)_y$} (m-4-1)
(m-4-1) edge node[below=5pt] {$\varphi^E_y$}  (m-4-3)
(m-3-4) edge node[auto] {$(\psi_0)_y$}  (m-4-3)
(m-2-3) edge [cross line] node[above right=2pt] {$(P_0^E)_{x\to y}$} (m-4-3);
\end{tikzpicture}
\]
\caption{A commuting cube}
\label{fig:commuting_cube}
\end{figure}
We know that the front and back square commute because $\varphi^E$ and $\varphi^{E_C^{v}}$ are module maps.  The left and right square commuting means that $\psi_1$ and $\psi_0$ are module maps, and the top and bottom square commuting says that $\psi_\bullet$ is a chain map.

For $x\in X$ such that $H_x\ni v$, we define the maps~$(\psi_i)_x$ from $(P_i^{E_C^{v}})_x\cong\Z^{H_x'}\cong\Z^{\{u_1,u_2\}}\oplus\Z^{H_x}$ to $(P_i^{E})_x\cong\Z^{H_x}$ by the following block matrices:
\[
(\psi_1)_x=
\begin{pmatrix}
\begin{matrix}
0&0\\0&0\\\vdots&\vdots
\end{matrix}
&
\scalebox{1.5}{$\UNIT$}
\end{pmatrix},
\qquad
(\psi_0)_x=
\begin{pmatrix}
\begin{matrix}
-1&0\\0&0\\\vdots&\vdots
\end{matrix}
&
\scalebox{1.5}{$\UNIT$}
\end{pmatrix}.
\]
If $H_x\not\ni v$, we simply let $(\psi_i)_x$ be the identity map from $(P_i^{E_C^{v}})_x\cong\Z^{H_x'}\cong\Z^{H_x}$ to $(P_i^{E})_x\cong\Z^{H_x}$ for both values of~$i$.

We check the commutativity of the left- and right-hand square. If $H_x\ni v$ then also $H_y\ni v$ and the left-hand square commutes because projecting from $\Z^{\{u_1,u_2\}}\oplus\Z^{H_x}$ to $\Z^{H_x}$ and then including into $\Z^{H_y}$ is the same thing as including $\Z^{\{u_1,u_2\}}\oplus\Z^{H_x}$ into $\Z^{\{u_1,u_2\}}\oplus\Z^{H_y}$ and then projecting onto $\Z^{H_y}$. In the right-hand square, if $H_x\ni v$, the two composite maps $\Z^{\{u_1,u_2\}}\oplus\Z^{H_x}\to\Z^{H_y}$ again restrict to the inclusion $\Z^{H_x}\hookrightarrow\Z^{H_y}$ on the second summand. On the first summand $\Z^{\{u_1,u_2\}}$, both take an element $(a,b)$ to the vector $(-a,0,0,\ldots)$. Now we assume that $H_x\not\ni v$. In the case $H_y\not\ni v$, commutativity of both squares is trivial. Otherwise, the left- and right-hand square commute because composing $(\psi_i)_y$ with the inclusion $\Z^{H_x}\hookrightarrow\Z^{\{u_1,u_2\}}\oplus\Z^{H_y}$ simply gives the inclusion $\Z^{H_x}\hookrightarrow\Z^{H_y}$ for both values of~$i$. This shows that the left- and right-hand square always commute.

Next we consider the top square. Commutativity is clear when $H_x\not\ni v$. Otherwise, it comes down to the identity of the two matrix products
\[
\scalebox{1.5}{$(\Asf_x^E-\UNIT)^\trans$}
\begin{pmatrix}
\begin{matrix}
0&0\\0&0\\\vdots&\vdots
\end{matrix}
&
\scalebox{1.5}{$\UNIT$}
\end{pmatrix},
\;
\begin{pmatrix}
\begin{matrix}
-1&0\\0&0\\\vdots&\vdots
\end{matrix}
&
\scalebox{1.5}{$\UNIT$}
\end{pmatrix}
\begin{pmatrix}
\begin{matrix}0&1\\1&0\end{matrix}&
\begin{matrix}0&0&\cdots&\cdots\\1&0&\cdots&\cdots\end{matrix}\\
\begin{matrix}0&1\\0&0\\\vdots&\vdots\end{matrix}&
\scalebox{1.5}{$(\Asf_x^E-\UNIT)^\trans$}
\end{pmatrix},
\]
both of which are indeed equal to
$
\begin{pmatrix}
\begin{matrix}
0&0\\0&0\\\vdots&\vdots
\end{matrix}
&
\scalebox{1.5}{$(\Asf_x^E-\UNIT)^\trans$}
\end{pmatrix}
$.

So far, we have defined a chain map $\psi_\bullet\colon\Psf_\bullet^{E_C^{v}}\to\Psf_\bullet^E$ and it remains to check that it induces isomorphisms on first and zeroth homology. Consider the diagram
\[
  \begin{tikzpicture}[baseline=(current bounding box.west)]
    \matrix(m)[cd,column sep=4em]{
      \ker(\varphi^{E_C^{v}})&P_1^{E_C^{v}}&P_0^{E_C^{v}}&\coker(\varphi^{E_C^{v}})\\
      \ker(\varphi^{E})&P_1^{E}&P_0^{E}&\coker(\varphi^{E}),\\
    };
    \begin{scope}[cdar]
      \draw[>->] (m-1-1) -- (m-1-2);
      \draw (m-1-2) -- node {\(\varphi^{E_C^{v}}\)} (m-1-3);
      \draw[->>] (m-1-3) -- (m-1-4);
      \draw[>->] (m-2-1) -- (m-2-2);
      \draw (m-2-2) -- node {\(\varphi^{E}\)} (m-2-3);
      \draw[->>] (m-2-3) -- (m-2-4);
      \draw (m-1-1) -- node[swap] {\(\psi_1|\)} (m-2-1);
      \draw (m-1-2) -- node[swap] {\(\psi_1\)}  (m-2-2);
      \draw (m-1-3) -- node {\(\psi_0\)} (m-2-3);
      \draw (m-1-4) -- node {\(\bar\psi_0\)} (m-2-4);
    \end{scope}
  \end{tikzpicture}
\]
where \(\psi_1|\) is the restriction of~\(\psi_1\) and \(\bar\psi_0\) is the map induced by~\(\psi_0\). We wish to show that, for every given point $x\in X$, the components \((\psi_1|)_x\) and \((\bar\psi_0)_x\) are invertible.

Let $m\in\ker(\varphi_x^{E_C^{v}})\subseteq (P_1^{E_C^{v}})_x\cong\Z^{\{u_1,u_2\}}\oplus\Z^{H_x}$ satisfy $(\psi_1)_x(m)=0$. Then $m$ must be of the form $(a,b,0,0,\ldots)^\trans$, and $0=\varphi_x^{E_C^{v}}(m)=(b,a,b,0,0\ldots)^\trans$ shows that $a=b=0$ and thus $m=0$. Hence \((\psi_1|)_x\) is injective.

Given $m=(m_1,m_2,\ldots)^\trans\in\ker(\varphi_x^{E})\subseteq (P_1^{E})_x\cong\Z^{H_x}$, we define
\[
 m'\defeq (-m_1,0,m_1,m_2,\ldots)^\trans\in\Z^{\{u_1,u_2\}}\oplus\Z^{H_x}\cong (P_1^{E_C^{v}})_x.
\]
Then $m'\in\ker(\varphi_x^{E_C^{v}})$ and $(\psi_1)_x(m')=m$. Hence \((\psi_1|)_x\) is surjective.

Let $\bar m\in\coker(\varphi_x^{E_C^{v}})$ satisfy $(\bar\psi_0)_x(\bar m)=0$. Then $\bar m$ is represented by an element $m=(a,b,m_1,m_2,\ldots)^\trans\in P_0^{E_C^{v}}\cong\Z^{\{u_1,u_2\}}\oplus\Z^{H_x}$ such that
\[
(\psi_0)_x(m)=(-a+m_1,m_2,m_3,\ldots)^\trans
\]
belongs to the image of~$\varphi_x^E$, that is, $(\psi_0)_x(m)=(\Asf_x^E-\UNIT)^\trans(n)$ for some $n=(n_1,n_2,\ldots)^\trans\in\Z^{H_x}$. We define the element
\[
n'=(b-n_1,a,n_1,n_2,\ldots)^\trans\in\Z^{\{u_1,u_2\}}\oplus\Z^{H_x}\cong(P_1^{E_C^{v}})_x.
\]
Its image under the map~$\varphi_x^{E_C^{v}}$ is
\[
\begin{pmatrix}
\begin{matrix}0&1\\1&0\end{matrix}&
\begin{matrix}0&0&\cdots&\cdots\\1&0&\cdots&\cdots\end{matrix}\\
\begin{matrix}0&1\\0&0\\\vdots&\vdots\end{matrix}&
\scalebox{1.5}{$(\Asf_x^E-\UNIT)^\trans$}
\end{pmatrix}
\begin{pmatrix}
 b-n_1\\a\\n_1\\n_2\\\vdots
\end{pmatrix}
=
\begin{pmatrix}
 a\\b\\a\\0\\0\\\vdots
\end{pmatrix}
+
\begin{pmatrix}
 0\\0\\-a+m_1\\m_2\\m_3\\\vdots
\end{pmatrix}
=m.
\]
This shows that $\bar m=0$. Hence \((\bar\psi_0)_x\) is injective. Surjectivity of \((\bar\psi_0)_x\) follows immediately from surjectivity of~$(\psi_0)_x$. This completes the proof that~$\psi_\bullet$ is a quasi-isomorphism.
\end{proof}

\section{The general case}
\label{sec:gen}

In this section, we remove the regularity assumptions on the graph~$E$ from the previous section. We will use the so-called Drinen--Tomforde Desingularization procedure introduced in~\cite{Drinen-Tomforde:Arbitrary_graph}. It proceeds by ``adding a tail'' to every sink and every infinite emitter in~$E$ (see \cite{Drinen-Tomforde:Arbitrary_graph}*{Definition~2.1}). It is shown in~\cite{Drinen-Tomforde:Arbitrary_graph}*{Theorem~2.11} that this procedure does not change the stable isomorphism class of the graph \Cstar{}algebra. As we shall see, the desired theorem reduces to Theorem~\ref{thm:reg} because Drinen--Tomforde Desingularization commutes with the Cuntz Splice up to stable isomorphism of the associated graph \Cstar{}algebra.

\begin{lemma}
  \label{lem:Desing}
Let $E$ be a graph. Let $v$ be a vertex of~$E$ supporting at least two return paths. Let $F$ be a Drinen--Tomforde Desingularization of~$E$. Then $v$ also supports two return paths in~$F$. Moreover, the Cuntz Splice $F_C^v$ is stably isomorphic to \textup{(}any Drinen--Tomforde Desingularization of\textup{)}~$E_C^v$.
\end{lemma}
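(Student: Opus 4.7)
The lemma has two claims to verify. For the first, that $v$ supports two return paths in $F$, I would lift each return path $e_{i_1}\cdots e_{i_n}$ in $E$ to a cycle in $F$: whenever an edge $e_{i_j}$ is emitted by an infinite emitter $w$ of $E$, replace $e_{i_j}$ by the traversal $w\to w_1\to\cdots\to w_{k-1}\to r(e_{i_j})$ through the Desingularization tail at~$w$, where $e_{i_j}$ is the $k$-th edge in the enumeration at~$w$. Since all added tail vertices are new, the lifted cycle meets $v$ only at its endpoints and is therefore a return path in~$F$; two distinct return paths in~$E$ lift to two distinct return paths in~$F$.

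For the stable isomorphism claim, I would split into two cases. If $v$ is regular in~$E$, then $v$ is untouched by Desingularization, so the Cuntz Splice and the Desingularization act on disjoint sets of vertices and $F_C^v$ is literally a Desingularization of $E_C^v$. The substantive case is when $v$ is an infinite emitter in~$E$. Here I would pick a Desingularization $G$ of $E_C^v$ that uses an enumeration of edges emitted by~$v$ listing the new edge $f_1$ first. Then in~$G$ the vertex~$v$ emits the tail edge $v\to w_1$ together with the preserved edge $v\to u_1$, and the first tail vertex $w_1$ has exactly one incoming edge (namely $v\to w_1$) together with two outgoing edges ($w_1\to w_2$ and $w_1\to r(e_1)$).

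The key step is then to observe that $F_C^v$ is obtained from $G$ by collapsing~$w_1$: remove $w_1$ and the edge $v\to w_1$, and reattach the two edges formerly emitted by~$w_1$ so that they emanate directly from~$v$. A straightforward index shift along the tail at~$v$ identifies the resulting graph with $F_C^v$. Since any two Desingularizations of $E_C^v$ have stably isomorphic \Cstar{}algebras by~\cite{Drinen-Tomforde:Arbitrary_graph}, it remains only to verify that this collapse step preserves stable isomorphism. I expect this to be the main obstacle: one can either appeal to an elimination move from the graph-moves literature (the vertex $w_1$ is a regular vertex with a unique incoming edge and no self-loops, matching the hypotheses of standard reduction moves), or give a direct Cuntz--Krieger family argument absorbing $w_1$ into $v$ via the partial isometry $s_{v\to w_1}$.
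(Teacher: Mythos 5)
Your proposal is correct and follows essentially the same route as the paper: reduce to the case where $v$ is the only singular vertex, desingularize $E_C^v$ with the enumeration putting $f_1$ first, and identify $F_C^v$ with the result of collapsing the first tail vertex. The collapse you describe (removing the regular vertex $w_1$ with its unique incoming edge and re-sourcing its outgoing edges at $v$) is exactly the Crisp--Gow contraction of the one-vertex subgraph that the paper invokes via \cite{Crisp-Gow:Contractible}*{Theorem~3.1}, so the step you flag as the main obstacle is indeed settled by the graph-moves literature.
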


\begin{proof}
The first claim is straightforward. Desingularizing a singular vertex \emph{different from~$v$} commutes with the Cuntz Splice at~$v$ even in the strong sense that composing the two procedures in both possible orders results in exactly the same graph (provided, in the case of an infinite emitter, that the same enumeration of outgoing edges is chosen in~$E$ and in~$E_C^v$---differing choices will give different graphs but result in stably isomorphic graph \Cstar{}algebras by \cite{Drinen-Tomforde:Arbitrary_graph}*{Theorem~2.11}). Hence we may assume that~$v$ is an infinite emitter and that all other vertices in~$E$ are regular. If the set of edges emitted from~$v$ in~$E$ is enumerated as $(e_1,e_2,\ldots)$, we choose the enumeration $(f_1,e_1,e_2,\ldots)$ for the set of edges emitted from~$v$ in~$E_C^v$. We will show that $F_C^v$ is stably isomorphic to the corresponding Drinen--Tomforde Desingularization of~$E_C^v$. This implies the second claim of the lemma.

The relevant segments of the two resulting graphs we need to compare look as follows:
\begin{align}
\label{eq:two_graphs}
 \begin{split}
 \xymatrix{\cdots & v_2\ar[l]\ar[d] & v_1\ar[l]\ar[d] & v\ar[d]\ar[l]\ar@/^0.5em/[r]^{f_1} & u_1 \ar@/^0.5em/[l] \ar@/^0.5em/[r] \ar@(dl,dr) & u_2 \ar@/^0.5em/[l] \ar@(ur,dr)\\ \cdots & r(e_3) & r(e_2) & r(e_1),}\\
 \xymatrix{\cdots & v_2\ar[l]\ar[d] & v_1\ar[l]\ar[d] & v\ar[l]\ar@/^0.5em/[r]^{f_1} & u_1 \ar@/^0.5em/[l] \ar@/^0.5em/[r] \ar@(dl,dr) & u_2 \ar@/^0.5em/[l] \ar@(ur,dr)\\ \cdots & r(e_2) & r(e_1).}
 \end{split}
\end{align}
Up to omitted vertices and edges, the first graph represents the Cuntz Splice~$F_C^v$ of~$F$, while the second graph is the desingularization of~$E_C^v$. Notice that a vertex of the form~$r(e_i)$ may agree with~$v$, although this is not reflected in our drawing.

We observe that the subgraph $T=\{v_1\}$ in the second graph is contractible in the sense of~\cite{Crisp-Gow:Contractible}  (although the criteria in~\cite{Crisp-Gow:Contractible} are complicated, they are trivially satisfied for our choice of~$T$ consisting of only one vertex and no edges), and that the contraction procedure described in \cite{Crisp-Gow:Contractible}*{Theorem~3.1} yields the graph
\[
 \xymatrix{\cdots & v_3\ar[l]\ar[d] & v_2\ar[l]\ar[d] & v\ar[d]\ar[l]\ar@/^0.5em/[r] & u_1 \ar@/^0.5em/[l] \ar@/^0.5em/[r] \ar@(dl,dr) & u_2 \ar@/^0.5em/[l] \ar@(ur,dr)\\ \cdots & r(e_3) & r(e_2) & r(e_1),}
\]
which is isomorphic to the first graph above. Hence the two graphs partially shown in~\eqref{eq:two_graphs} have stably isomorphic graph \Cstar{}algebras by \cite{Crisp-Gow:Contractible}*{Theorem~3.1}. Notice that the assumption of no tails in \cite{Crisp-Gow:Contractible}*{Theorem~3.1} does no harm here: we may simply replace all eventual tails with sinks before applying the Crisp--Gow Contraction and desingularize these sinks afterwards.

We remark that it is also possible to write the particular contraction move above as a combination of simpler moves (an out-splitting followed by two reversed out-delays) whose preserving the stabilized graph \Cstar{}algebra was already established in~\cite{Bates-Pask:Flow_equivalence}; see also \cite{Sorensen:Geometric_class_simple_graph}*{Theorem~5.2}.
\end{proof}

\begin{theorem}[]
\label{thm:gen}
Let $E$ be a graph. Assume that $\Cst(E)$ is purely infinite and has finitely many ideals. Let $v$ be a vertex of~$E$ supporting at least two return paths. Then $\Cst(E)\otimes\Comp\cong\Cst(E_C^{v})\otimes\Comp$.
\end{theorem}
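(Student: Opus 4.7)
The plan is to reduce Theorem~\ref{thm:gen} to the regular case Theorem~\ref{thm:reg} via Drinen--Tomforde Desingularization, using Lemma~\ref{lem:Desing} as the bridge. Let $F$ denote a Drinen--Tomforde Desingularization of~$E$. By \cite{Drinen-Tomforde:Arbitrary_graph}*{Theorem~2.11}, we have $\Cst(E)\otimes\Comp\cong\Cst(F)\otimes\Comp$, and by construction the graph~$F$ has neither sinks nor infinite emitters. The first step is therefore to verify that $F$ satisfies the hypotheses of Theorem~\ref{thm:reg}: pure infiniteness and finiteness of the ideal lattice for $\Cst(F)$ both follow immediately from the stable isomorphism $\Cst(F)\otimes\Comp\cong\Cst(E)\otimes\Comp$.

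Next, I invoke Lemma~\ref{lem:Desing} to transport the Cuntz Splice through the desingularization: the vertex~$v$ still supports at least two return paths in~$F$, and $F_C^v$ is stably isomorphic to a Drinen--Tomforde Desingularization of~$E_C^v$. Applying \cite{Drinen-Tomforde:Arbitrary_graph}*{Theorem~2.11} once more, that desingularization of~$E_C^v$ has the same stabilized graph \Cstar{}algebra as $\Cst(E_C^v)$. Combining these two facts, we obtain $\Cst(F_C^v)\otimes\Comp\cong\Cst(E_C^v)\otimes\Comp$.

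With these two identifications in place, the theorem is reduced to establishing $\Cst(F)\otimes\Comp\cong\Cst(F_C^v)\otimes\Comp$, which is precisely the statement of Theorem~\ref{thm:reg} applied to the regular graph~$F$ at the vertex~$v$. Concatenating the three stable isomorphisms
\[
\Cst(E)\otimes\Comp\cong\Cst(F)\otimes\Comp\cong\Cst(F_C^v)\otimes\Comp\cong\Cst(E_C^v)\otimes\Comp
\]
yields the conclusion.

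Since the heavy lifting has already been done in Lemma~\ref{lem:Desing} (the commutativity of the two moves up to stable isomorphism) and Theorem~\ref{thm:reg} (the regular case of the main result), I do not expect any real obstacle in this final step beyond bookkeeping. The only subtlety worth flagging explicitly is to check that pure infiniteness and the finiteness of the ideal lattice are invariants of stable isomorphism, so that Theorem~\ref{thm:reg} is applicable to~$F$; both are standard.
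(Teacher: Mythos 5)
Your proposal is correct and follows exactly the paper's argument: the same chain $\Cst(E)\otimes\Comp\cong\Cst(F)\otimes\Comp\cong\Cst(F_C^v)\otimes\Comp\cong\Cst(E_C^v)\otimes\Comp$ obtained by combining the Drinen--Tomforde theorem, Lemma~\ref{lem:Desing}, and Theorem~\ref{thm:reg}. Your explicit check that pure infiniteness and the finite ideal lattice pass to~$F$ is a small point the paper leaves implicit, but it is the same proof.
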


\begin{proof}
Combining \cite{Drinen-Tomforde:Arbitrary_graph}*{Theorem~2.11}, Theorem~\ref{thm:reg} and Lemma~\ref{lem:Desing}, we get
\[
 \Cst(E)\otimes\Comp\cong\Cst(F)\otimes\Comp\cong\Cst(F_C^v)\otimes\Comp\cong\Cst(E_C^v)\otimes\Comp,
\]
where $F$ denotes some Drinen--Tomforde Desingularization of~$E$.
\end{proof}

\begin{bibdiv}
  \begin{biblist}
\bib{Bates-Hong-Raeburn-Szymanski:Ideal_structure}{article}{
  author={Bates, Teresa},
  author={Hong, Jeong Hee},
  author={Raeburn, Iain},
  author={Szyma\'nski, Wojciech},
  title={The ideal structure of the $C^*$\nobreakdash-algebras of infinite graphs},
  journal={Illinois J. Math.},
  volume={46},
  date={2002},
  number={4},
  pages={1159--1176},
  issn={0019-2082},
 % review={\MRref{1988256}{2004i:46105}},
 % eprint={http://projecteuclid.org/euclid.ijm/1258138472},
}

\bib{Bates-Pask:Flow_equivalence}{article}{
  author={Bates, Teresa},
  author={Pask, David},
  title={Flow equivalence of graph algebras},
  journal={Ergodic Theory Dynam. Systems},
  volume={24},
  date={2004},
  pages={367--382},
}

\bib{Bentmann:Real_rank_zero_and_int_cancellation}{article}{
  author={Bentmann, Rasmus},
  title={Kirchberg $X$\nobreakdash-algebras with real rank zero and intermediate cancellation},
  journal={J. Noncommut. Geom.},
  volume={8},
  number={4},
  pages={1061--1081},
  date={2014},
}

\bib{Bentmann-Meyer:More_general}{article}{
  author={Bentmann, Rasmus},
  author={Meyer, Ralf},
  title={A more general method to classify up to equivariant KK\nobreakdash-equivalence},
  note={\arxiv{1405.6512}},
  year={2014},
}

\bib{Bowen-Franks}{article}{
  author={Bowen, Rufus},
  author={Franks, John},
  title={Homology for Zero-Dimensional Nonwandering Sets },
  journal={Ann. of Math. (2)},
  volume={106},
  date={1977},
  number={1},
  pages={73--92},
}
    
\bib{Crisp-Gow:Contractible}{article}{
  author={Crisp, Tyrone},
  author={Gow, Daniel},
  title={Contractible subgraphs and Morita equivalence of graph $C^*$\nobreakdash-algebras},
  journal={Proc. Amer. Math. Soc.},
  volume={134},
  date={2006},
  number={7},
  pages={2003--2013},
}

\bib{Cuntz:topological_Markov_chains_II}{article}{
  author={Cuntz, Joachim},
  title={A class of $C^*$\nobreakdash-algebras and topological Markov chains.   II. Reducible chains and the Ext-functor for $C^*$\nobreakdash-algebras},
  journal={Invent. Math.},
  volume={63},
  date={1981},
  number={1},
  pages={25--40},
  issn={0020-9910},
 % review={\MRref{608527}{82f:46073b}},
 % doi={10.1007/BF01389192},
}

\bib{Cuntz-Krieger:topological_Markov_chains}{article}{
  author={Cuntz, Joachim},
  author={Krieger, Wolfgang},
  title={A class of $C^*$\nobreakdash-algebras and topological Markov chains},
  journal={Invent. Math.},
  volume={56},
  date={1980},
  number={3},
  pages={251--268},
  issn={0020-9910},
 % review={\MRref{561974}{82f:46073a}},
 % doi={10.1007/BF01390048},
}

\bib{Drinen-Tomforde:Arbitrary_graph}{article}{
  author={Drinen, Douglas John},
  author={Tomforde, Mark},
  title={The $C^*$\nobreakdash-algebras of arbitrary graphs},
  journal={Rocky Mountain J. Math.},
  volume={35},
  date={2005},
  number={1},
  pages={105--135},
  issn={0035-7596},
 % review={\MRref{2117597}{2006h:46051}},
 % doi={10.1216/rmjm/1181069770},
}
    
\bib{Eilers-Restorff-Ruiz-Sorensen:Geom_classif}{article}{
  author={Eilers, S\o ren},
  author={Restorff, Gunnar},
  author={Ruiz, Efren},
  author={S\o rensen, Adam},  
  title={Geometric classification of unital graph $C^*$\nobreakdash-algebras of real rank zero},
  note={\arxiv{1505.0677}},
  date={2015},
}

\bib{Hong-Szymanski:purely_inf_graph_algs}{article}{
  author={Hong, Jeong Hee},
  author={Szyma\'nski, Wojciech},
  title={Purely Infinite Cuntz–Krieger Algebras of Directed Graphs},
  journal={Bull. London Math. Soc.},
  volume={35},
  date={2003},
  number={5},
  pages={689--696},
}

\bib{Kirchberg:Michael}{article}{
  author={Kirchberg, Eberhard},
  title={Das nicht-kommutative Michael-Auswahlprinzip und die Klassifikation nicht-einfacher Algebren},
  % language={German},
  pages={92--141},
  book={
    title={\(C^*\)-Algebras (M\"unster, 1999)},
    publisher={Springer},
    place={Berlin},
    date={2000},
  },
 % review={\MRref{1796912}{2001m:46161}},
}

\bib{MacLane:Homology}{book}{
  author={Mac Lane, Saunders},
  title={Homology},
  series={Classics in Mathematics},
  note={Reprint of the 1975 edition},
  publisher={Springer},
  place={Berlin},
  date={1995},
  pages={x+422},
  isbn={3-540-58662-8},
%  review={\MRref{1344215}{96d:18001}},
}

\bib{Meyer-Nest:Bootstrap}{article}{
  author={Meyer, Ralf},
  author={Nest, Ryszard},
  title={\(C^*\)\nobreakdash-Algebras over topological spaces: the bootstrap class},
  journal={M\"unster J. Math.},
  volume={2},
  date={2009},
  pages={215--252},
  issn={1867-5778},
 % review={\MRref{2545613}{2011a:46105}},
 % eprint={http://nbn-resolving.de/urn:nbn:de:hbz:6-10569452982},
}

\bib{Raeburn:Book}{book}{
  author={Raeburn, Iain},
  title={Graph algebras},
  series={CBMS Regional Conference Series in Mathematics},
  volume={103},
  publisher={Published for the Conference Board of the Mathematical Sciences, Washington, DC; by the American Mathematical Society, Providence, RI},
  date={2005},
  pages={vi+113},
  isbn={0-8218-3660-9},
 % review={\MR{2135030 (2005k:46141)}},
}

\bib{Raeburn-Szymanski:CK_algs_of_inf_graphs_and_matrices}{article}{
  author={Raeburn, Iain},
  author={Szymanski, Wojciech},
  title={Cuntz--Krieger algebras of infinite graphs and matrices},
  journal={Trans. Amer. Math. Soc.},
  volume={356},
  date={2004},
  pages={39-59},
  issn={0002-9947},
 % doi={10.1090/S0002-9947-03-03341-5},
 % review={\MRref{2020023}{}},
}

\bib{Restorff:Classification}{article}{
  author={Restorff, Gunnar},
  title={Classification of Cuntz--Krieger algebras up to stable isomorphism},
  journal={J. Reine Angew. Math.},
  volume={598},
  date={2006},
  pages={185--210},
  issn={0075-4102},
 % review={\MRref{2270572}{2007m:46090}},
 % doi={10.1515/CRELLE.2006.074},
}

\bib{Rordam:Class_of_CK}{article}{
   author={R\o rdam, Mikael},
   title={Classification of Cuntz--Krieger algebras},
   journal={$K$-Theory},
   volume={9},
   date={1995},
   number={1},
   pages={31--58},
   issn={0920-3036},
  % review={\MRref{1340839}{ (96k:46103)}},
  % doi={10.1007/BF00965458},
}

\bib{Sorensen:Geometric_class_simple_graph}{article}{
  author={S\o rensen, Adam P. W.},
  title={Geometric classification of simple graph algebras},
  journal={Ergodic Theory Dynam. Systems},
  volume={33},
  date={2013},
  number={4},
  pages={1199--1220},
  issn={1469-4417},
  %review={\MRref{}{}},
  %doi={10.1017/S0143385712000260},
}

  \end{biblist}
\end{bibdiv}
\end{document}